\begin{document}
\newtheorem{theorem}{Theorem}[section]
\newtheorem{conjecture}{Conjecture}[section]
\newtheorem{proposition}{Proposition}[section]
\newtheorem{definition}{Definition}[section]
\title{An Unconditional Improvement to the Running Time of the Quadratic Frobenius Test}
\author{Jon Grantham}
\date{\today}
\maketitle
\begin{abstract}
In a 2006 paper, Damg{\aa}rd and Frandsen designed a faster version of the Quadratic Frobenius Test. This test assumes the Extended Riemann Hypothesis
in order to find small nonresidues, which allow construction of quadratic extensions with faster arithmetic. In this paper, I describe a version of the test 
using small nonresidues, without assuming any unproven hypothesis.
\end{abstract}

\section{Introduction}
From Fermat's little theorem, we know that for any odd prime $p$ and $a$ with $p\not{|}a$, $a^{p-1}\equiv 1\pmod p$. By its contrapositive, we know that any $n$ with $a^{n-1}\not\equiv 1\pmod n$ is composite; this is known as the {\bf Fermat pseudoprime test}.

Let $p=2^rs+1$ Because there are only two square roots of $1$ modulo $p$, we know that $a^s\equiv 1$ or $a^{2^js}\equiv -1$ for some $0\le j \le r-1$. The contrapositive of this fact is the basis of the {\bf strong pseudoprime test}.

This test has long been used as a fast way to prove the compositeness of integers. The test itself takes $(1+\operatorname{o}(1))\log_2n$ multiplications modulo $n$. Monier \cite{monier} and Rabin \cite{rabin} quantified the worst-case error bound in 1980 by showing that a composite will pass the test to at most $1/4$ of the bases $a$. (This bound is sharp.)

Pseudoprime tests in terms of recurrence sequences exist. Let $U_n(P,Q)$ be a Lucas sequence, where $P$ and $Q$ are integers, $U_0=0$, $U_1=1$ and $U_n=PU_{n-1}-QU_{n-2}$. Let $\Delta=P^2-4Q$.  For a prime $p\nmid 2Q\Delta$, we have $\smash[b]{U_{p-\left(\frac\Delta{p}\right)}}\equiv 0 \bmod p$. The contrapositive of this fact gives the {\bf Lucas pseudoprimality test}.

Also in 1980, Baillie and Wagstaff \cite{bw}, along with Pomerance, Selfridge and Wagstaff \cite{psw}, recognized the utility of combining Lucas pseudoprime tests with Fermat or strong pseudoprime tests. In particular,
the combination of a Fermat pseudoprime test with a Lucas pseudoprime test is often called a Baillie-PSW test.

In order to get a worst-case error bound in the style of Monier and Rabin \cite{rabin}, in 1998 I \cite{meone} introduced a randomized version of the Baillie-PSW test, known as the Quadratic Frobenius Test (QFT). The QFT takes
time comparable to three iterations of the strong pseudoprime test, but produces a better worst-case error bound. The QFT is expressed in terms of quadratic extensions of $\mathbb{Z}/n\mathbb{Z}$ rather than second-order recurrence sequences. Theorem 5.6 of \cite{metwo} shows that composites that pass the QFT also pass a Lucas pseudoprime test. Although unfortunately not explicit in that paper, it is possible to show that composites passing the QFT also pass a Fermat test.

Subsequent improvements to the QFT were made by Zhang \cite{zhang} and M{\"u}ller \cite{muller1}, \cite{muller3}.

In their 2006 paper \cite{df}, Damg{\aa}rd and Frandsen made two improvements to the Quadratic Frobenius Test (QFT). They improved the worst-case and average-case error bounds by looking at $24$th roots of unity. They also
improved the running time of the algorithm, under the assumption of the Extended Riemann Hypothesis (ERH), by constructing a quadratic extension with small coefficients.

In this paper, I examine the second improvement, but drop the ERH assumption, at the cost of a somewhat larger quadratic extension.

Additionally, the original QFT analysis assumed that a modular multiplication and a modular squaring took equivalent time. Other authors have not followed this convention; I explore this distinction.

\section{The Quadratic Frobenius Test and Its Reformulation}

The original QFT is as follows.

\begin{definition}
Suppose $n>1$ is odd, $\left(\frac{b^2+4c}n \right)=-1$, and
$\left(\frac{-c}n \right)=1$. Let $B=50000$.
The {\bf Quadratic Frobenius Test (QFT) with parameters $(b,c)$} consists
of the following.

\begin{enumerate}
\item Test $n$ for divisibility by primes less than or equal to
$\min\{B,\sqrt{n}\}$.
If it is divisible by one of these primes, declare $n$ to be
composite and stop.
 
\item Test whether $\sqrt{n}\in\mathbb{Z}$.  If it is, declare $n$ to be composite
and stop.

\item Compute $x^{\frac{n+1}2}$ mod $(n,x^2-bx-c)$.  If
$x^{\frac{n+1}2}\not\in\mathbb{Z}/n\mathbb{Z}$, declare $n$ to be composite and
stop.
 
\item Compute $\smash[t]{x^{n+1}}$ mod $(n,x^2-bx-c)$.  If 
$\smash[t]{x^{n+1}}\not\equiv -c$,
declare $n$ to be composite and stop.

\item Let $\smash[t]{n^2-1=2^rs}$, where $s$ is odd.
If $x^s\not\equiv 1$ mod $(n,x^2-bx-c)$,
and $\smash[t]{x^{2^js}\not\equiv -1}$ for all $0\le j\le
r-2$, declare $n$ to be composite and stop.

\item If $n$ is not declared composite in Steps 1--5, declare $n$ to be a
probable prime.

\end{enumerate}
\end{definition}

This test is based on finding a quadratic extension whose discriminant has Jacobi symbol $-1$, and then 
testing whether the $n$th power map on $x$ behaves like the Frobenius map. Theorem 3.4 of \cite{meone}
shows that the running time is at most three times that of an ordinary (Fermat) probable prime test.

Instead, one could choose an
extension $x^2-c$, and then choose an element of that extension. This is the approach introduced by
Damg{\aa}rd and Frandsen.

\begin{definition}
Suppose $n>1$ is odd, $\left(\frac{b^2-ca^2}n\right)=1$, and
$\left(\frac cn\right)=-1$. Let $z=ax+b$. Let $B=50000$.
The {\bf reformulated Quadratic Frobenius Test (rQFT) with parameters $(a,b,c)$} consists
of the following.

\begin{enumerate}
\item Test $n$ for divisibility by primes less than or equal to
$\min\{B,\sqrt{n}\}$.
If it is divisible by one of these primes, declare $n$ to be
composite and stop.
 
\item Test whether $\sqrt{n}\in\mathbb{Z}$.  If it is, declare $n$ to be composite
and stop.

  \item Compute $z^{\frac{n+1}2}$ mod $(n,x^2-c)$. If
$z^{\frac{n+1}2}\not\in\mathbb{Z}/n\mathbb{Z}$, declare $n$ to be composite and
stop.
  \item Compute $\smash[t]{z^{n+1}}$ mod $(n,x^2-c)$.  If 
$\smash[t]{z^{n+1}}\not\equiv b^2-ca^2$,
declare $n$ to be composite and stop.

\item Let $\smash[t]{n^2-1=2^rs}$, where $s$ is odd.
If $z^s\not\equiv 1$ mod $(n,x^2-c)$,
and $\smash[t]z^{2^js}\not\equiv -1$ mod $(n,x^2-c)$ for all $0\le j\le
r-2$, declare $n$ to be composite and stop.

\item If $n$ is not declared composite in Steps 1--5, declare $n$ to be a
probable prime.

\end{enumerate}
  
\end{definition}

It is easy to pass between the two formulations with a change of variables.

An advantage of the second formulation is that if one can find small $c$ with $\left(\frac cn\right)=-1$, then the arithmetic is faster. (Alternatively, if one finds a nontrivial $c$ with  $\left(\frac cn\right)=0$, then $n$ is proven composite via factorization.)
  
\section{Unconditionally Finding Small Quadratic Extensions}

Damg{\aa}rd and Frandsen \cite{df} assume the Extended Riemann Hypothesis to find small $c$ with $\left(\frac cn\right)=-1$.

We can, however, unconditionally find such a $c$ small enough to give us a computational advantage. 

\begin{theorem}
  If $n$ is a sufficiently large composite number that is not a square, and $\delta>\frac 1{3\sqrt{e}}$, a positive proportion of the numbers $0<c<n^{\delta}$ have $\left(\frac{c}n\right)\not=1$.
\end{theorem}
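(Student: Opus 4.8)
The plan is to show that the residue density is bounded away from $1$; that is, writing $\chi(c)=\left(\frac cn\right)$ (a real, non-principal, completely multiplicative character, non-principal precisely because $n$ is not a square) and $x=n^\delta$, I aim to prove $\#\{c\le x:\chi(c)=1\}\le(1-\gamma)x$ for some fixed $\gamma>0$. The first reduction is that it suffices to show the weighted sum over small ``bad'' primes, $\sum_{p\le x,\,\chi(p)\ne1}\tfrac1p$, is bounded below by a positive constant: by a standard Selberg--Delange / Landau type argument, if this sum is $\gg 1$ then a positive proportion of $c\le x$ have an odd number of prime factors $p$ with $\chi(p)=-1$, hence $\chi(c)=-1$; and any prime $p\mid n$ with $p\le x$ contributes a value $\chi=0\ne1$ and only helps (it factors $n$ outright). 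I would run this contrapositively: assume the nonresidue density is $o(1)$ and deduce $\sum_{p\le x,\,\chi(p)=-1}\tfrac1p=o(1)$.

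Under that assumption almost every $x$-smooth integer has all of its prime factors among the residues, so the number of residues up to a larger cutoff $z$ is at least $(\rho(u)-o(1))\,z$, where $u=\log z/\log x$ and $\rho$ is the Dickman function (using $\Psi(z,x)\sim\rho(u)z$). On the other hand I would invoke an unconditional character-sum bound: Burgess's inequality gives $\sum_{c\le z}\chi(c)=o(z)$, whence the residues up to $z$ number at most $(\tfrac12+o(1))z$. Comparing the two estimates forces $\rho(u)\le\tfrac12+o(1)$. Since $\rho(u)=1-\log u$ on $[1,2]$ and $1-\log u=\tfrac12$ exactly at $u=\sqrt e$, this gives $u\ge\sqrt e-o(1)$, i.e.\ $x\le z^{1/\sqrt e+o(1)}$. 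Taking $z$ as small as the character-sum bound allows and solving for $\delta$ then yields the stated threshold.

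The location of the threshold, and in particular the constant $\tfrac1{3\sqrt e}$, comes entirely from this last optimization: the $\sqrt e$ is the Dickman value at which $\rho$ crosses $\tfrac12$, while the ``$3$'' is the admissible Burgess exponent for the Jacobi symbol of a general modulus --- the $r=3$ case of Burgess's bound is nontrivial down to $z=n^{1/3+\epsilon}$, and with $z=n^{1/3+\epsilon}$ one obtains $x\le n^{(1/3+\epsilon)/\sqrt e}$, so $\delta>\tfrac1{3\sqrt e}$ produces the contradiction. I expect the main obstacle to be precisely this character-sum input: because $n$ may be neither prime nor cubefree, the cleaner exponent $\tfrac14$ (and the resulting constant $\tfrac1{4\sqrt e}$) available for prime moduli is not at my disposal, and I must verify that Burgess's estimate with $r=3$ applies to $\chi$ through its conductor $q^{\ast}\mid n$, with $q^{\ast}$ (rather than $n$) entering harmlessly since $q^{\ast}\le n$. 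The remaining steps --- the Selberg--Delange conversion to a positive proportion, and the smooth-number count $\Psi(z,x)\sim\rho(u)z$ in the relevant range $u\in(1,2)$ --- are routine once the cancellation at scale $n^{1/3+\epsilon}$ is secured.
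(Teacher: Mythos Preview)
Your plan coincides with the paper's proof: both feed Burgess's bound at $z=n^{1/3+\epsilon}$ into the Granville--Soundararajan machinery that propagates cancellation in $\sum_{c\le z}\chi(c)$ from scale $z$ down to scale $z^{\alpha}$ for any $\alpha>1/\sqrt e$, and then take $\delta=\alpha\gamma$ with $\gamma$ just above $1/3$. The only difference is packaging: the paper quotes that Granville--Soundararajan step as a black box, whereas you unpack it---the Dickman/smooth-number argument with $\rho(\sqrt e)=\tfrac12$ is precisely how that result is established, and your ``first reduction'' from integer densities to the weighted prime sum $\sum_{p\le x,\,\chi(p)=-1}1/p$ is part of the same pretentious toolkit (closer to Hal\'asz than to Selberg--Delange, but correct).
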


\begin{proof}
First, a ``Burgess bound'' shows that the sum of Jacobi symbols modulo $n$ is small when the sum is taken to a power of $n$ near $n^{1/3}$. Theorem A of \cite{burgess} shows that for any $\epsilon>0$, $\sum_{k<n^\gamma} \left(\frac kn\right) < (n^\gamma)^{2/3}n^{1/9+\epsilon}$. We can then take $\gamma=1/3+6\epsilon$ to get $\sum_{k<n^\gamma} \left(\frac kn\right)~<~n^{1/3+{5}\epsilon}=\operatorname{o}(n^\gamma)$.

Then a result of Granville and Soundararajan allows the power of $n$ to be reduced to that in the statement of the theorem. The result was listed as Theorem 4.1 of \cite{banks} as an ``unpublished result,'' but the arguments are
given in \cite{gransound}, particuarly Corollary 1.8 The formulation in \cite{banks}, however, is the one needed here. That states that if $\sum_{m\le x} f(n)=\operatorname{o}(x)$ and $\alpha>1/\sqrt{e}$, then $\left|\sum_{m\le x^{\alpha}} f(m) \right| < Cx^{\alpha}$, for some $C<1$.
For any $\delta>\frac 1{3\sqrt{e}}$ we can choose $\gamma$ and $\alpha$ such that $\delta=\alpha\gamma$ and get the bound in the theorem.

\end{proof}

\section{The Cost of QFTs}

Atkin \cite{atkin} defined a ``Selfridge unit'' (SU) as the time required to perform $(1+\operatorname{o}(1))\log_2 N$ modular squarings on a number of size $N$. I \cite{meone} adapted that to the ``selfridge'', the time required to perform $\log_2 N$ modular
multiplications (whether they were squarings or not). Atkin was displeased by this simplification; in fact, he made the assumption that one modular multiplication was equal to the cost of two modular squarings (MSQs).

Damg{\aa}rd and Frandsen \cite{df}, however, assume that each modular multiplication costs $1.3$ MSQs. The discrepancy between the ratio they use and the one Atkin used could be used in support of my assumption that the different costs of a squaring and a multiplication are implementation-dependent, and should be ignored. On the other hand, neither ratio is $1$, which argues in favor of tracking the difference between squarings and multiplications. I do so below.

The original QFT takes two modular squarings and one modular multiply for each of $(1+\operatorname{o}(1))\log_2 N$ operations in the quadratic extension. If we assume that each multiply is $m$ MSQs, that is a cost of $2+m$ MSQs per operation. The reformulation of Damg{\aa}rd and Frandsen takes 3 modular multiplies ($3m$ MSQs), or 2 modular multiplies ($2m$) assuming the ERH. Using Theorem 3.1 allows one of the modular multiplies to be with a number of size $\delta N$, which cuts the time required to $\delta m$ MSQs, for a total of $(2+\delta)m$ MSQs.

Under Atkin's original weighting ($m=2$), the QFT costs $4$ SUs, $6$ SUs under the reformulation and $4$ under the ERH. Neither that cost nor $(2+\delta)2\approx 4.4$ SUs from Theorem 3.1 is an improvement.

Under the Damg{\aa}rd-Frandsen approach, where $m=1.3$, the original QFT is $3.3$ SUs, the reformulation is $3.9$ SUs. The ERH brings that down to $2.6$ SUs, while Theorem 3.1 allows $\approx 2.86$ SUs.

Using the definition of selfridges from \cite{meone}, both the original QFT and the reformulation cost $3$ selfridges, while the ERH brings that down to $2$ selfridges, and the result from the previous section allows $2+\delta\approx 2.2$ selfridges.

\section*{Acknowledgments}

Thanks to Paul Pollack for valuable e-mail exchanges about Burgess bounds. Thanks to Xander Faber for helpful comments on an earlier draft of this paper. 

\bibliography{pseudo4}
\bibliographystyle{monthly}

\end{document}